\definecolor{refkey}{rgb}{0,0,1}
\definecolor{labelkey}{rgb}{1,0,0}
\definecolor{darkblue}{rgb}{0.0, 0.0, 0.55}
\definecolor{darkcerulean}{rgb}{0.03, 0.27, 0.49}
\definecolor{darkpowderblue}{rgb}{0.0, 0.2, 0.8}
\definecolor{britishracinggreen}{rgb}{0.0, 0.26, 0.15}
\newenvironment{blu}{\color{darkpowderblue}}{}
\newenvironment{mgg}{}{}
\newenvironment{brg}{\color{britishracinggreen}}{}
\newenvironment{red}{\color{red}}{}
\newenvironment{grn}{\color{green}}{}
\newcommand{\bgr}{\begin{grn}}
	\newcommand{\egr}{\end{grn}}
\newcommand{\bre}{\begin{red}}
	\newcommand{\ere}{\end{red}}
\newcommand{\bas}{\begin{brg}}
	\newcommand{\eas}{\end{brg}}
\newcommand{\bblu}{\begin{blu}}
	\newcommand{\eblu}{\end{blu}}
\newcommand{\bmag}{\begin{mgg}}
	\newcommand{\emag}{\end{mgg}}
\NewDocumentCommand{\colorrule}{O{.4pt}m}{{\color{#2}\hrule height#1}\vspace{4mm}}
\newtheorem{thm}{Theorem}[section]
\newtheorem{lem}[thm]{Lemma}
\newtheorem{coro}[thm]{Corollary}
\theoremstyle{definition}
\newtheorem{defn}[thm]{Definition}
\newtheorem{rem}[thm]{Remark}
\numberwithin{equation}{section}
\def\cH{{\mathcal H}} 
\def\cA{{\mathcal A}}
\def\beq{\begin{equation}} 
	\def\eeq{\end{equation}} 
\def\ot{{\otimes}} 
\def\wres{\mathcal{W}}
\def\Tr{\hbox{Tr}}
\def\IC{\mathbb{C}}
\def\IZ{\mathbb{Z}}
\newcommand{\ket}[1]{|#1\rangle}    %% ket vector
\newcommand{\up}{{\mathord{\uparrow}}} %% `up' spinors
\newcommand{\dn}{{\mathord{\downarrow}}} %% `down' spinors
\newcommand{\shalf}{{\scriptstyle\frac{1}{2}}} %% tiny fraction  1/2
\newcommand{\oh}{{\tfrac{1}{2}}}    %% small fraction 1/2
\newcommand{\half}{{\mathchoice{\oh}{\oh}{\shalf}{\shalf}}} %% 1/2
\newcommand{\ssesq}{{\scriptstyle\frac{3}{2}}} %% tiny fraction  3/2
\newcommand{\sesq}{{\mathchoice{\ooh}{\ooh}{\ssesq}{\ssesq}}} %% 3/2
\newcommand{\ooh}{{\tfrac{3}{2}}}   %% small fraction 3/2
\newbox\ncintdbox \newbox\ncinttbox
\newcommand{\ncint}{\mathop{\mathchoice{\copy\ncintdbox}%
		{\copy\ncinttbox}{\copy\ncinttbox}%
		{\copy\ncinttbox}}\nolimits}
\DeclareMathOperator{\Res}{Res}     %% residue at a pole
\newcommand{\ox}{\otimes}           %% tensor product
\title{Spectral Torsion} 
\author[L.\ D\k{a}browski]{Ludwik D\k{a}browski${}^{(1)}$}
\address{${}^{(1)}$ SISSA (Scuola Internazionale Superiore di Studi Avanzati), \newline\indent Via Bonomea 265, 34136 Trieste, Italy.} 
\email{dabrow@sissa.it} 
\author[A.\ Sitarz]{Andrzej Sitarz${}^{(2)}$}
\author[P.\ Zalecki]{Pawe\l{} Zalecki${}^{(2)}$}
\thanks{This work is supported by the Polish National Science Centre grant 2020/37/B/ST1/01540}
\address{${}^{(2)}$ Institute of Theoretical Physics, Jagiellonian University, \newline\indent
	prof.\ Stanis\l awa \L ojasiewicza 11, 30-348 Krak\'ow, Poland.}
\email{andrzej.sitarz@uj.edu.pl}  
\email{pawel.zalecki@doctoral.uj.edu.pl}
\date{}
\subjclass[2010]{58B34, 46L87, 58J42, 83C65, 58J50}  
\keywords{spectral geometry, noncommutative geometry, torsion, noncommutative residue. }
\begin{document}
\maketitle
\begin{abstract}
We introduce a trilinear functional of differential one-forms for a finitely summable regular spectral triple with a noncommutative residue.
We demonstrate that for a canonical spectral triple over a closed spin manifold it recovers the torsion
of the linear connection. We examine several spectral triples, including Hodge-de\,Rham, Einstein-Yang-Mills, almost-commutative two-sheeted space, 
conformally rescaled noncommutative tori, and quantum $SU(2)$ group, showing that the third one has a nonvanishing torsion if nontrivially coupled.
\end{abstract}
\vspace{3mm}
%%%%%%%%%%%%%%%%%%%%%%%%%%%%%%%%%%%%%%%%%%%%%%%%%%%%%%%%%%%%%%%%
	% !TeX spellcheck = en_GB
\section{Introduction}
The existence and uniqueness of a metric-compatible linear connection with vanishing torsion is one of the fundamental theorems of Riemannian geometry. Torsion appears quite naturally as a vector-valued two-form in this approach, and the assumption that it identically vanishes leads to the Levi-Civita connection, which solely depends on the metric. In the background of general relativity lies the torsion-free condition of Riemannian geometry, which very accurately describes the gravitational interaction of bodies.
%%%%%%%%%%%%%
Torsion, on the other hand, has a physical interpretation as the quantity that measures the twisting of reference frames along geodesics and has been considered an independent field in physics in Einstein-Cartan theory \cite{Ca23}. Torsion, unlike gravitational fields, must vanish in a vacuum and does not propagate; however, its existence causes nonlinear interactions of matter with spin and has the potential to change the standard singularity theorems of General Relativity (see \cite{HHKN76,Sh02}
 for a review of physical theories).

The emergence of noncommutative geometry \cite{Co80,Co94}, which generalises standard notions of differential geometry to an algebraic (or operator-algebraic) setup, has raised new questions about the concepts of linear connections, metric, and torsion.  A transparent link between different approaches, ranging from purely algebraic noncommutative differential geometry to operator algebraic spectral triple formalism, and a unifying view of metric, linear connection, and torsion has yet to be established. Although the algebraic concepts of linear connections, torsion, and metric compatibility have been achieved, the existence of the Levi-Civita connection can only be proven in special cases (see \cite{BM20, BGJ21a,BGJ21b} and the references therein). {
	It is also worth noting an alternative approach, which introduces a notion of curvature  at the operator-algebraic level using Hilbert modules and unbounded Kasparov modules  \cite{MRvS22}.}
	
On the other hand, the spectral triple approach, with the Dirac operator being the fundamental object of geometry, has not yet been able to determine whether the constructed Dirac operators correspond to the Levi-Civita connection or whether they contain a nonvanishing torsion. So far, for that purpose, one could only try to minimise the spectral functional corresponding to the integrated scalar curvature while keeping the metric defined by the Dirac operator unchanged.
Even in the classical case of manifolds, this can be a difficult task
(see computations of spectral action for a nonvanishing torsion \cite{HPS10, PfSt12,ILV08}),  which becomes even harder in a genuinely noncommutative situation \cite{Si14}.  {In either case, even if a noncommutative
counterpart of a Levi-Civita connection exists, the path to define Ricci and
scalar curvature is not unique. In the spectral case, for a chosen Dirac or
Laplace-type operator one can define scalar curvature functional but it is
entangled with the volume element. Similarly, from the Einstein functional
as defined in \cite{DSZ23} it is not fully clear whether one can separate from
it the counterpart of the Ricci tensor alone. }
	
The aim of this paper is to propose a plain, purely spectral method that allows to determine the torsion as the density of the torsion functional and impose (if possible) the torsion-free condition for regular finitely summable spectral 
triples. We consider it as a first step towards linking the spectral approach
with the algebraic approach based on Levi-Civita connections.
	
	\section{Dirac operator with torsion}
	
	Let us assume that $M$ is a closed spin manifold of dimension $n$, with the metric tensor 
{	
	$g$.
In terms of a (local) basis $e_i$ of orthonormal vector fields on $M$ the 
{metric preserving}
covariant derivative can be written as 
	\begin{equation}\label{connform1}
		\nabla_{e_i}e_j = \omega_{ijk} e_k
	\end{equation}
where the connection coefficients $\omega_{ijk}$  split into the Levi-Civita part $\omega^{LC}_{ijk}$ and the contorsion part $\tau_{ijk}$: 
\begin{equation}\label{connform2}
		\omega_{ijk} = \omega^{LC}_{ijk} + \tau_{ijk}.
	\end{equation}
The first can be expressed explicitly through the structure constants $c_{ijk}$  of the commutators of orthonormal vector fields $e_i$,
	\begin{equation}\label{connform}
		\omega^{LC}_{ijk}:= \frac{1}{2} (c_{ijk}+ c_{kij} + c_{kji}), \qquad [ e_i, e_j ] =: c_{ijk} e_k.
	\end{equation}
	and the remaining part through the torsion of the                                                 
	connection, 
	$$ \tau_{ijk}=\half( T_{ijk} + T_{kij}+ T_{kji} ),
	\qquad T(e_i, e_j):= \nabla_{e_i} e_j-\nabla_{e_j} e_i - [e_i,e_j] =: T_{ijk} e_k.$$
	Note the antisymmetry 
$\omega^{LC}_{ijk}=-\omega^{LC}_{ikj}$, $\tau_{ijk}=-\tau_{ikj}$ while $c_{ijk}= - c_{jik}$, 
{	$T_{ijk}= - T_{jik}$,
	}	
and that one can re-express
% using the contorsion,
	$$ c_{ijk}= \omega^{LC}_{ijk}- \omega^{LC}_{jik} {\rm \ and \ } T_{ijk}= \tau_{ijk}- \tau_{jik}. $$
	
	The lift of the covariant derivative to Dirac spinor fields reads locally
	\begin{equation}
		\nabla_{e_i} = {\mathcal{L}_{e_i}} - \frac{1}{4}\omega_{ijk}\gamma^j\gamma^k,
	\end{equation}
	where ${\mathcal{L}}$ is the Lie derivative and $\gamma^j$ are the usual generators of Clifford algebra.
	The Dirac operator on a spin manifold is, in a local basis of orthonormal frames,  
	a first-order differential operator,           
	\begin{equation}
{		D_{\tau} 
}
= i \gamma^j \nabla^{}_{e_j}, \label{Dirac}
	\end{equation}
	and is unique for a given metric and contorsion tensor. 
	
	The canonical Dirac operator, 
	$D$, is the one for vanishing  contorsion and we call it torsion-free Dirac operator.	
	}
	%%%%%%%%%%%%%%%%%%%%%%%%%%%%%%%%%%%%%%
	The torsion tensor can be split into three parts, the vector part, totally antisymmetric
	part and the Cartan part, however the  Cartan part does not contribute at all to the Dirac
	operator and the vector part has to vanish if the resulting Dirac operator has to be self-adjoint 
	\cite{FrSu78}. Therefore, out of the full torsion  only the antisymmetric part appears in 
	the selfadjoint Dirac operator and hence we shall assume that torsion (and contorsion) arezoom
	antisymmetric tensors. Therefore,  the Dirac operator with a fully antisymetric torsion $T$ is,
	\begin{equation}
		D_T = i \gamma^j \bigl( \nabla^{(LC)}_{e_j} - \frac{1}{8} T_{jkl} \gamma^k \gamma^l \bigr)
		= D - \frac{i}{8} T_{jkl} \gamma^j  \gamma^k \gamma^l, \label{DiracT}
	\end{equation}                    
	where $D$ is the standard Dirac operator, which originates from the Levi-Civita connection.  
%	Note that since $[D_T, f] = [D, f]$ for any function $f \in C^\infty(M)$,  the algebra of differential forms remains the same. 
We will denote $\hat u$ the Clifford multiplication by the one-form $u$.

	Let $\wres$ denote the Wodzicki residue \cite{Wo87,Gu85}.	We introduce the following definition of the torsion functional 	of which the name is explained by the subsequent torsion recovery theorem.
	\begin{defn}\label{deftfun}
	For $D_T$ given by \ref{DiracT} the trilinear functional of differential one-forms $u,v,w$ 
		\begin{equation}
			{\mathcal T}(u, v,  w) = \wres \bigl( \hat u \hat v \hat w D_T |D_T|^{-n} \bigr), \label{tfun}
		\end{equation}
		is called {\em torsion functional}.
	\end{defn}	
As our first main result we state, 
	\begin{thm}
\label{MThm}
The torsion functional for the Dirac operator $D_T$ on a Riemannian closed spin manifold of
dimension $n$ is, 
\begin{equation}
	{\mathcal T}(u, v,  w) = - 2^{m} i V(S^{n-1})\int\limits_M  \, u_a v_b w_c T_{abc}vol_g.
\label{MThm-fo}
\end{equation}		                                            
where $m=\frac{n}{2}$ for even $n$ and $m=\frac{n+1}{2}$ for odd $n>1$, and $V(S^{n-1})$ is the volume of the unit sphere $S^{n-1}$.
\end{thm}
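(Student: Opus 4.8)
The plan is to compute the Wodzicki residue $\wres\bigl(\hat u \hat v \hat w D_T |D_T|^{-n}\bigr)$ via the standard symbol calculus for classical pseudodifferential operators, where the residue is the integral over the cosphere bundle of the $-n$-homogeneous component of the complete symbol. First I would recall that $D_T = D - \frac{i}{8}T_{jkl}\gamma^j\gamma^k\gamma^l$ differs from the torsion-free Dirac operator only by a zeroth-order (matrix-valued) term built from the totally antisymmetric torsion. Since $\hat u \hat v \hat w$ is also a zeroth-order Clifford-multiplication operator, the operator $P = \hat u\hat v\hat w\, D_T |D_T|^{-n}$ is classical of order $-(n-1)$, and I only need its symbol of order exactly $-n$ to extract the residue. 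The leading symbol of $D_T$ agrees with that of $D$, namely $i\gamma^j\xi_j$ (the principal symbol), while $|D_T|^{-n}$ has principal symbol $|\xi|^{-n}$ times the identity; the torsion term enters $D_T$ only at order zero.

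The key computational step is to isolate the contribution of the torsion term to the degree $-n$ symbol. Writing $D_T|D_T|^{-n}$ and expanding, the piece containing $T$ comes with the factor $-\frac{i}{8}T_{jkl}\gamma^j\gamma^k\gamma^l$ sandwiched appropriately; the torsion-free part $D|D|^{-n}$ contracted with $\hat u\hat v\hat w$ must be shown to produce a Clifford element whose normalized Clifford trace vanishes after integration over the sphere, so that only the explicitly torsion-dependent term survives. Concretely I would reduce everything to computing, at a point, the fibre trace $\Tr\bigl(\hat u\hat v\hat w \,\gamma^j\gamma^k\gamma^l\bigr)$ against the torsion tensor, integrated over $|\xi|=1$. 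The Clifford trace of a product of gamma matrices is nonzero only when the indices pair up or form the volume element, and the totally antisymmetric structure of $T_{jkl}$ together with the antisymmetrization forced by $\hat u\hat v\hat w = \gamma^a\gamma^b\gamma^c u_a v_b w_c$ selects precisely the contraction $u_a v_b w_c T_{abc}$. The dimension-dependent prefactor $2^m$ is the dimension $\dim\Sigma = 2^{\lfloor n/2\rfloor}$ of the spinor representation (adjusted for parity of $n$), and the factor $V(S^{n-1})$ arises from the trivial angular integration of the $-n$-homogeneous symbol over the cosphere, since after the trace the $\xi$-dependence collapses to a constant multiple of $|\xi|^{-n}$.

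The main obstacle I anticipate is the careful bookkeeping of the gamma-matrix algebra: one must verify that all terms in the degree $-n$ symbol \emph{other} than the one proportional to $T_{jkl}\gamma^j\gamma^k\gamma^l$ either vanish under the Clifford trace or integrate to zero over $S^{n-1}$ by parity (odd number of $\xi$ factors) or by the tracelessness of products of fewer than $n$ distinct gammas. In particular the expansion of $|D_T|^{-n}$ produces subleading symbol terms involving derivatives of the metric and the spin connection (the Levi-Civita part), and I must confirm these contribute nothing to the torsion functional — physically this is expected because the Levi-Civita piece is common to $D$ and $D_T$ and the functional is designed to detect only $T$. A clean way to organize this is to note that $\mathcal{T}$ is trilinear and the right-hand side must be totally antisymmetric and algebraic in $u,v,w$ (no derivatives), which sharply constrains the surviving terms; one then only needs to fix the single overall constant by evaluating both sides in a convenient local orthonormal frame at a point, for instance flat coordinates where the Levi-Civita connection vanishes and $D_T$ reduces to $i\gamma^j\partial_j - \frac{i}{8}T_{jkl}\gamma^j\gamma^k\gamma^l$ with constant $T$.
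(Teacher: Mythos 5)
Your strategy coincides with the paper's: expand the complete symbol in normal (or flat) coordinates, isolate the homogeneous component of order $-n$ of $\hat u\hat v\hat w\,D_T|D_T|^{-n}$, and evaluate the Clifford trace and the angular integral over $S^{n-1}$. There is, however, one concrete point at which your plan, as written, would produce the wrong constant. You assert that the torsion enters the degree $-n$ symbol only through the zeroth-order factor $-\tfrac{i}{8}T_{jkl}\gamma^j\gamma^k\gamma^l$ of $D_T$ itself, and you characterize the subleading symbols of $|D_T|^{-n}$ as involving only ``derivatives of the metric and the spin connection (the Levi-Civita part)'', to be shown to contribute nothing. This misses a second torsion contribution: the symbol of $D_T^2$ has a first-order part ${\mathfrak a}_1=-\tfrac{i}{8}\,T_{abc}\bigl(\gamma^j\gamma^a\gamma^b\gamma^c+\gamma^a\gamma^b\gamma^c\gamma^j\bigr)\xi_j+o({\bf 1})$, arising from the cross terms between the principal part $i\gamma^j(i\xi_j)$ and the zeroth-order torsion part of $D_T$. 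Consequently the symbol of order $-(n+1)$ of $|D_T|^{-n}$ carries a term proportional to $m\,\|\xi\|^{-n-2}T_{abc}\bigl(\gamma^j\gamma^a\gamma^b\gamma^c+\gamma^a\gamma^b\gamma^c\gamma^j\bigr)\xi_j$, which, paired with the principal symbol $-\gamma^k\xi_k$ of $D_T$ and integrated against $\int_{S^{n-1}}\xi_j\xi_k=\tfrac{1}{n}V(S^{n-1})\delta_{jk}$, yields a contribution proportional to the \emph{same} Clifford element $T_{abc}\gamma^a\gamma^b\gamma^c$ as your first term. Both terms survive and must be added to obtain the prefactor $-2^m i\,V(S^{n-1})$ in \eqref{MThm-fo}; this second term persists even in your proposed flat-coordinate, constant-$T$ normalization, so it cannot be eliminated by a choice of frame, and your trilinearity/antisymmetry argument fixes only the tensorial form, not this constant.

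Two smaller omissions. First, the fibre trace you actually need is $\Tr\bigl(\gamma^a\gamma^b\gamma^c\,\gamma^j\gamma^k\gamma^l\bigr)$ contracted with $u_av_bw_c\,T_{jkl}$ --- six gamma factors, not three --- and it is the total antisymmetry of $T$ that collapses this to $2^m\,u_av_bw_cT_{abc}$ up to sign. Second, for odd $n$ the operator $|D_T|^{-n}$ is not an integer power of $D_T^2$, so one needs the homogeneous symbol expansion of $|D_T|=\sqrt{D_T^2}$ (the paper invokes Taylor's construction for this); your proposal does not distinguish the parities, though once the symbol of $|D_T|$ is in hand the computation is identical.
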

An immediate consequence is,	
\begin{coro}
The Dirac operator $D_T$ is torsion free iff the torsion functional vanishes:
	$$ T \equiv 0 \;\; \Leftrightarrow \;\; {\mathcal{T}}(\hat u, \hat v, \hat w) = 0. $$
\end{coro}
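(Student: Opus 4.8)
The Corollary is immediate once Theorem \ref{MThm} is granted. The forward direction is trivial: $T\equiv0$ makes the right-hand side of \eqref{MThm-fo} vanish, hence $\mathcal T\equiv0$. For the converse, if $\mathcal T(u,v,w)=0$ for all one-forms $u,v,w$, then $\int_M vol_g\,u_av_bw_cT_{abc}=0$ for all $u,v,w$; localising $u,v,w$ near an arbitrary point and letting $u_av_bw_c$ run over all decomposable tensors forces $T_{abc}=0$ pointwise (this uses that $T$ is totally antisymmetric, so contraction with decomposable tensors already detects all its components). Thus the only real content is Theorem \ref{MThm}, whose proof I sketch below; its $T=0$ specialisation reproves the forward implication directly.

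The plan for Theorem \ref{MThm} rests on the locality of the Wodzicki residue, $\wres(P)=\int_M\int_{|\xi|_g=1}\tr\,\sigma_{-n}(P)$, so that only the order $-n$ homogeneous part of the complete symbol survives. Since $\hat u,\hat v,\hat w$ are $\xi$-independent bundle endomorphisms, the symbol of the product factorises, $\sigma(\hat u\hat v\hat w\,D_T|D_T|^{-n})=\hat u\hat v\hat w\,\sigma(D_T|D_T|^{-n})$, so I only need the order $-n$ part of $\sigma(D_T|D_T|^{-n})$, traced against $\hat u\hat v\hat w=u_av_bw_c\gamma^a\gamma^b\gamma^c$ and integrated over the cosphere. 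I would evaluate this at a fixed $x_0\in M$ in Riemann normal coordinates with a synchronous orthonormal frame, so that $g_{ab}(x_0)=\delta_{ab}$, $\partial g(x_0)=0$ and the Levi-Civita spin connection vanishes at $x_0$. There $\sigma(D)=i\gamma^j\xi_j$, whereas the torsion enters through the $\xi$-independent tensorial term $\Phi:=-\tfrac i8 T_{jkl}\gamma^j\gamma^k\gamma^l$, which does \emph{not} vanish in normal coordinates. Two facts follow at once: first, $\sigma(D|D|^{-n})=i\gamma^j\xi_j\,|\xi|^{-n}$ is exactly homogeneous of order $1-n$, so the torsion-free functional has no order $-n$ term and vanishes; second, all symbol-calculus cross terms carrying $\partial_x g$ or $\partial_x\omega^{LC}$ drop out at $x_0$, while the $\partial_xT$ terms are pushed to order $-n-1$, so the order $-n$ symbol is computed from the constant-coefficient symbol $\sigma(D_T)=i\gamma^j\xi_j+\Phi$.

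The decisive step is a parity count. Writing $\hat\xi=\gamma^j\xi_j$, every monomial of $\sigma(D_T|D_T|^{-n})$ is a product of factors $\hat\xi$ (odd in Clifford generators, odd in $\xi$), $\Phi$ (odd in generators, even in $\xi$) and scalar powers of $|\xi|^2$. The Clifford trace against the three-generator $\hat u\hat v\hat w$ is nonzero only for an odd total number of generators, and the integral over $|\xi|=1$ annihilates terms odd in $\xi$; together these force an even number of $\hat\xi$ and an odd number of $\Phi$. As each $\Phi$ carries symbol-degree $0$, a degree count then shows that exactly one $\Phi$ can appear at order $-n$, so $\mathcal T$ is \emph{exactly linear in $T$}, all higher powers landing at order $\le -n-2$. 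The two surviving contributions are $\Phi|\xi|^{-n}$ and $i\hat\xi$ times the linear-in-$T$ part of $|D_T|^{-n}$, which, since $|\xi|^2$ is central, is precisely $-\tfrac n2|\xi|^{-n-2}\,i\{\hat\xi,\Phi\}$.

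It then remains to assemble the two terms and integrate. On $|\xi|=1$ one is left with $\tr\big(\hat u\hat v\hat w[c_1\Phi+\tfrac n2\,\hat\xi\Phi\hat\xi]\big)$ for an explicit $c_1$; the angular integral $\int_{|\xi|=1}\xi_p\xi_q\,d\xi=\tfrac1n V(S^{n-1})\delta_{pq}$ replaces $\hat\xi\Phi\hat\xi$ by $\tfrac1nV(S^{n-1})\gamma^p\Phi\gamma^p$, and the contraction identity $\gamma^p\Phi\gamma^p=(n-6)\Phi$ makes the explicit $n$-dependence cancel, leaving a pure multiple of $\Phi$. Evaluating $\tr(\gamma^{abc}\gamma^{klm})\propto 2^{[n/2]}\delta^{abc}_{klm}$ against $u_av_bw_c$ and the antisymmetric $T_{klm}$ then produces the density $u_av_bw_cT_{abc}$, while $\int_M\int_{|\xi|=1}$ reconstitutes $\int_M vol_g$. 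I expect the main obstacle to be purely the control of the overall constant: the Clifford normalisation $\{\gamma^a,\gamma^b\}=\pm2\delta^{ab}$, the factors of $i$ and the normalisation of $\wres$ must all be tracked consistently to land on $-2^m i\,V(S^{n-1})$. The one genuinely non-routine point is the dependence on the parity of $n$: the irreducible spinor trace yields $2^{(n-1)/2}$, which is the stated $2^m$ only for even $n$, so the odd-dimensional case must be treated separately and is where the exponent $m=\tfrac{n+1}2$ (the extra factor of two) has to be accounted for.
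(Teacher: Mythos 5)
Your deduction of the Corollary is correct and is exactly the paper's (implicit) argument: the paper offers no separate proof, declaring it an immediate consequence of Theorem \ref{MThm}, and your forward direction plus the localisation/nondegeneracy argument for the converse is the right way to make that precise. Your supplementary sketch of Theorem \ref{MThm} also follows the paper's route (normal coordinates, two leading symbols of $D_T^2$ and $|D_T|^{-n}$, linearity in $T$ at order $-n$, Clifford trace and cosphere integral), so the whole proposal is essentially the same approach.
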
	
To prove the theorem we employ the calculus of pseudodifferential operators and expansion
of the symbols in normal coordinates. 
	
\subsection{Normal coordinates and symbols}
	
Let us recall that in the normal coordinates {(see \cite{SS12}, Chapter 10.3 and \cite{MSV99}) } the metric and the Levi-Civita connection have a Taylor expansion at a given point (${\bf x}=0$) on the manifold,
	\begin{equation}
		\begin{aligned}
			&	g_{ab} = \delta_{ab} - \frac{1}{3} R_{acbd} x^c x^d + o({\bf x^2}),  \\
			&	g^{ab} = \delta_{ab} + \frac{1}{3} R_{acbd} x^c x^d + o({\bf x^2}), \\
			&	\sqrt{\hbox{det}(g)}  = 1  - \frac{1}{6} \mathrm{Ric}_{ab} x^a x^b + o({\bf x^2}), 	\\
			&	\Gamma^a_{bc}(x)= -\frac{1}{3}(R_{abcd}+R_{acbd } ) x^{d} +o({\bf x^2}),			
		\end{aligned}	
	\end{equation}
	where $R_{acbd}$ and $\mathrm{Ric}_{ab}$ are the components of the Riemann and Ricci tensors, respectively, at the point with 
	${\bf x}=0$ and we use the notation $o({\bf x^k})$ to denote that we expand a function up to the polynomial of order $k$ in the normal coordinates.

	As a next step, we compute the symbols of $D_T$ and its inverses as pseudodifferential 
	operators (see Appendix). We start with the symbol of $D_T$ expanded in normal 
	coordinates up to $o({\bf 1})$:
	\begin{equation} 
		\sigma(D_T) = i \gamma^j \biggl( i \xi_j  
		%+ \frac{1}{8} R_{\ell jps} x^\ell \gamma^p \gamma^s 
		{ - \frac{1}{8}  T_{jps} \gamma^p \gamma^s } \biggr)   + o({\bf 1}).
	\end{equation}
	A Clifford multiplication by a differential form $v$ can be expanded in local normal coordinates 
	around a point on a manifold, as:
	$$ \hat{v} = v_a \gamma^a + o({\bf 1}). $$
	%%%%%%%%%%%%%%%%%%%%%%%%%%%%%%%%%%%%%%%%%%%%%%%%%%
	%%%%%%%%%%%%%%%%%%%%%%%%%%%%%%%%%%%%%%%%%%%%%%%%%%
	We can now proceed with the proof of the theorem. \\

	\noindent {\sl Proof of Theorem \ref{MThm}.}\ \ 
	First of all, observe that since each one-form is a zero-order differential operator then effectively we need to compute the symbol of order $-n$ of $D_T |D_T|^{-n}$.
	We start with the even dimension case $n=2m$, postponing the case of odd dimensions till later.  To compute the two leading symbols of $D_T^{-2m}$, denoted by 
	$\mathfrak{c}_{2m} + \mathfrak{c}_{2m+1}$ we need the two leading
	symbols of $\sigma(D_T^2) = {\mathfrak a}_2 + {\mathfrak a}_1 + {\mathfrak a}_0 $,
	%%%%%%%%%%%%%%%%%%%%%%%%%%%%%%%%%%%%%%%%%%%%%%%%%%%
	\begin{equation} \label{D2T}
		\begin{aligned}
			{\mathfrak a}_2 =&  ||\xi||^2  + o({\bf x}), \\
			%%%%%%%%%%%%%%%%%%%%%%%%%%%%%%%%%%%%%%%%%%%%%%%%%%%%%		
			{\mathfrak a}_1 = & 
			% \frac{2i}{3}  \mathrm{Ric}_{ab} x^a \xi_b   
			%	+ \frac{i}{4}    R_{abjk} \gamma^j \gamma^k x^b \xi^a	
			{- \frac{1}{8}  i  T_{abc}\bigl(  \gamma^j \gamma^a \gamma^b \gamma^c   +
				\gamma^a \gamma^b \gamma^c \gamma^j    \bigr)  \xi_j } + o({\bf 1}). 
		\end{aligned}	
	\end{equation}
	Then, using  results from \cite{DSZ23} (Lemma A.1) we have:
	\begin{equation} 
		\begin{aligned}
			&\mathfrak c_{2m} = ||\xi||^{-2m} + o(\bf x),\\
			%%%%%%%%%%%%%%%%%%%%%%%%%%%%%%%%%%%%%%%%%%%%%%%%%%%%%	
			%%%%%%%%%%%%%%%%%%%%%%%%%%%%%%%%%%%%%%%%%%%%%%%%%%%%%
			&\mathfrak c_{2m+1}= 
			% - \frac{2i}{3}m ||\xi||^{-2m-2}	\mathrm{Ric}_{ak} x^k \xi_a 
			%	- \frac{m}{4}  i ||\xi||^{-2m-2} \bigl(  R_{abjk} \gamma^j \gamma^k x^b \xi_a \bigr) \\
			%	&	\qquad 
			{ \frac{1}{8} i m  ||\xi||^{-2m-2} T_{abc}\bigl(  \gamma^j \gamma^a \gamma^b \gamma^c   +
				\gamma^a \gamma^b \gamma^c \gamma^j    \bigr)  \xi_j } 
			+ o({\bf 1}).
		\end{aligned}\label{symD-2m}
	\end{equation}		
	%%%%%%%%%%%%%%%%%%%%%%%%%%%%%%%%%
	We are now ready to compute the symbol of order $-n$ of $\hat u \hat v \hat w D_T |D_T|^{-n} $:
	
	\begin{equation}
		\begin{aligned} 	 
			\sigma_{-n}(\hat u &\hat v \hat w D_T |D_T|^{-n})  = - \frac{1}{8} 
			u_a \gamma^a  v_b \gamma^b  w_c \gamma^c \biggl(  i  T_{jps} 
			\gamma^j \gamma^p \gamma^s||\xi||^{-2m} \\
			%%%%%%%%%%%%%%%%%%%%%%%%%%%%%%%%%%%%%
			& +  ( - \gamma^k   ) ( - i) m  ||\xi||^{-2m-2} T_{psr}\bigl(  \gamma^j \gamma^p \gamma^s \gamma^r   +
			\gamma^p \gamma^s \gamma^r \gamma^j   
			\bigr) \xi_j \xi_k  \biggr) + o({\bf 1}).
		\end{aligned}
	\end{equation}	
	
	Then, we proceed with the calculation of the Wodzicki residue.
	
	\begin{equation}
		\begin{aligned} 	 
			\wres \bigl( \hat u \hat v \hat w D_T |D_T|^{-n} \bigr) &= 
			\int_M vol_g \, \hbox{Tr}_{Cl}\, \int\limits_{||\xi||=1} \sigma_{-n}(\hat u \hat v \hat w D_T |D_T|^{-n}) \\
			& = - 2^{m} i V(S^{n-1}) \int\limits_M vol_g \, u_a v_b w_c T_{abc},
		\end{aligned} \label{wres-tor}
	\end{equation}	                                                       
 {where we use the standard formulas for the trace of gamma matrices. }
	This ends the proof for even $n$. To see that the proof holds also for odd $n$ we apply
	the usual technique to compute the symbol of $|D_T|=\sqrt{D_T^2}$ following \cite[Chapter XII \S 1, pp 288-299]{Ta81}. 
	It follows that the symbol $|D_T|$ reads, as a sum of homogenous symbols of which we need only the first two (up to relevant order in normal coordinates):
	$$ \sigma(|D_T|) = \biggl(|\xi| + o(\mathbf{x}) \biggl) +
	 \biggl(- \frac{1}{8}  i  T_{abc}\bigl(  \gamma^j \gamma^a \gamma^b \gamma^c   +             
	 \gamma^a \gamma^b \gamma^c \gamma^j    \bigr)  \xi_j |\xi|^{-1} + o({\bf 1})\biggr)
 + \cdots.$$
	Then the rest of the proof is analogous to the case of even dimensions.
	 $\qquad\Box$
	                                       
{
	From \eqref{MThm} it follows that the density of the torsion functional is function-linear and completely antisymmetric in $u,v,w$.
	Therefore it suffices to consider it on three orthonormal 1-forms
	$e^a,e^b,e^c$, with the condition that $a,b,c$ are mutually different 
	numbers from $1$ to $n$. 
	For $n=4$ we compute that $ {\mathcal T}(e^a,e^b,e^c)$ equals 
	(possibly up to a sign) to $W(\gamma^d\gamma^1\gamma^2\gamma^3\gamma^4)$
	where $\{d\}=\{1,2,3,4\} \setminus \{a,b,c\}$.
	For $d=3$  we proceed in a similar way, 
	taking into account, however, that in our irreducible (not faithful) representation $\gamma^1\gamma^2\gamma^3=i$.
	
	Then, using the definition of the Hodge star operator on $M$ and the Clifford relations for gamma matrices, we formulate:
	\begin{rem}
		In low dimensions the torsion functional simplifies
		significantly if we use the Hodge star and take $t = \star (u\wedge v\wedge w)$.
		The lowest odd dimension for which there can be a nonvanishing completely antisymmetric torsion is $n=3$, 
		%using (\ref{wres-tor}), 
		where 
		$$ {\mathcal T}(u,v,w)  = \wres \bigl(\hat{t} D_T |D_T|^{-n} \bigr), $$
		whereas the lowest possible even dimension is $n=4$, where
		$$ {\mathcal T}(u,v,w)  = - \wres \bigl(\hat{t} \chi D_T |D_T|^{-n} \bigr) $$
		with $\chi =  \gamma^1\gamma^2\gamma^3\gamma^4$.
	\end{rem}
}
%%%%%%%%%%%%%%%%%%%%%%%%%%%%%%%%%	
	\section{Noncommutative torsion}
	The spectral definition of torsion, Def.\ref{deftfun}, can be readily extended to the noncommutative case of spectral triples. 
	Let  $(\cA, D, \cH)$ be a $n$-summable unital spectral triple, $\Omega^1_D(\cA)$ be the $\cA$ bimodule of one forms generated by  $\cA$ and $[D, \cA]$, which by definition consists of bounded operators on $\cH$. We assume that there exists a generalised 
	algebra of pseudodifferential operators, which contains $\cA$, $D$, $|D|^\ell$ for $\ell\in\IZ$, and there exists a tracial state $\wres$ on it, called a noncommutative residue. Moreover, we assume that the noncommutative residue identically vanishes 
	on $T |D|^{-k}$ for any $k>n$  (c.f. \cite{CoMo95}) and a zero-order operator $T$ (i.e., an operator in the algebra generated by $\cA$ and $\Omega^1_D(\cA)$).
Note that these assumptions are satisfied if the spectral triple is finitely summable and regular, (c.f.  \cite{Hi06} and \cite{Uuye11}).                                                 
	We introduce then the following definition of torsion functional for spectral triples.
	\begin{defn}\label{TorWres}
	{\em Torsion functional} for the spectral triple  $(\cA, D, \cH)$ 	
	is a trilinear functional of  $u,v,w \in \Omega^1_D(\cA)$ defined by,
	$$ \mathcal{T}(u,v,w) = \wres (u v w D|D|^{-n}). $$	
	We say that the spectral triple is torsion-free (or, for simplicity, the Dirac operator $D$ is torsion-free)
	if  $\mathcal{T}$ vanishes identically.
%%%%%%%%%%%%%%%%%%%%%%%%%%%%%%%%%%%%%%%%%%
	\end{defn}
	We recall a  stronger condition of  {\em spectrally closed} spectral triples defined in  \cite[Definition 5.5]{DSZ23} which, in particular,  implies the torsion-free condition stated above.
	\begin{defn}
		We say that a spectral triple with a trace on the generalised algebra of pseudodifferential operators is spectrally closed if for any zero-order operator $P$ (from the algebra generated by $\cA,$ and  $\Omega^1_D(\cA)$ )
		the following holds:
		$$ \wres (P D |D|^{-n}) = 0. $$
	\end{defn}
	\begin{lem}[\hbox{\cite[Lemma 5.6]{DSZ23}}]
		The classical spectral triple over a closed spin-c manifold $M$ of dimension $n=2m$ is spectrally closed in the above sense and hence is torsion-free.
	\end{lem}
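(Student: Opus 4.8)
The plan is to establish the stronger spectral closedness directly, by showing that the Wodzicki residue density of $PD|D|^{-n}$ vanishes pointwise for every zero-order $P$ in the algebra generated by $\cA$ and $\Omega^1_D(\cA)$. The key preliminary remark is that such a $P$ is a bundle endomorphism: a function $f\in\cA$ acts by multiplication and a generator $[D,f]=\widehat{df}$ of $\Omega^1_D(\cA)$ acts by Clifford multiplication, so the symbol of $P$ is $\xi$-independent, $\sigma(P)=P(x)$, with no lower-order terms. For the composition of a multiplication operator with a pseudodifferential operator the symbol product collapses to a single term, hence exactly
\[
\sigma(PD|D|^{-n})=P(x)\,\sigma(D|D|^{-n}),\qquad \sigma_{-n}(PD|D|^{-n})=P(x)\,\sigma_{-n}(D|D|^{-n}).
\]
Therefore I only need to prove that $\int_{\|\xi\|=1}\sigma_{-n}(D|D|^{-n})\,dS(\xi)$ vanishes at every point; the residue is then zero for all $P$ by pairing $\hbox{Tr}_{Cl}$ against $P(x)$ and integrating over $M$.

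Next I would evaluate this component in normal coordinates with a synchronous frame at an arbitrary point $x_0$, where $\sqrt{\det g}(x_0)=1$, so that the coordinate expression of the (invariant) residue density is the value one wants. Since $n=2m$ is even, $|D|^{-n}=(D^2)^{-m}$ is an integer power, which is what makes $\sigma_{-n}(D(D^2)^{-m})$ the component one order below the leading symbol $-\gamma^j\xi_j\,\|\xi\|^{-2m}$. Expanding the composition, this subleading symbol is the sum of three contributions,
\[
\sigma_1(D)\,\sigma_{-2m-1}\big((D^2)^{-m}\big),\quad
\sigma_0(D)\,\sigma_{-2m}\big((D^2)^{-m}\big),\quad
-i\,\partial_{\xi_l}\sigma_1(D)\,\partial_{x_l}\sigma_{-2m}\big((D^2)^{-m}\big).
\]
In each term the factor carrying the subleading data is a first-order quantity in the metric: the spin-connection part $\sigma_0(D)$, the subprincipal symbol $\sigma_{-2m-1}((D^2)^{-m})$, and the derivative $\partial_{x_l}(g^{ij}\xi_i\xi_j)^{-m}$ are all linear in the Levi--Civita connection and the first derivatives of the metric. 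In normal coordinates these vanish at $x_0$, so $\sigma_{-n}(D|D|^{-n})(x_0,\xi)=0$ for every $\xi$. Hence the residue density vanishes at $x_0$, and as $x_0$ is arbitrary, $\wres(PD|D|^{-n})=0$; torsion-freeness follows by specialising $P=uvw$.

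The step that requires genuine care — the main obstacle — is the claim that curvature enters the symbol of $D|D|^{-n}$ only at order $-n-1$, equivalently that $\sigma_{-2m-1}((D^2)^{-m})$ is odd in $\xi$ and linear in the connection; this is the parametrix bookkeeping for $(D^2)^{-m}$ and must be carried out order by order. The mechanism is clarified by contrast with Theorem \ref{MThm}: for $D_T$ the additional term $\sigma_0(D_T)=-\tfrac{i}{8}T_{jps}\gamma^j\gamma^p\gamma^s$ is tensorial and therefore does \emph{not} vanish at the centre of normal coordinates, so the middle contribution $\sigma_0(D_T)\,\sigma_{-2m}((D_T^2)^{-m})$ survives and reproduces exactly the torsion density (\ref{MThm-fo}); for the canonical Dirac operator this term is absent, which is the precise reason the density vanishes. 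Finally, the even dimension is used essentially, to write $|D|^{-n}=(D^2)^{-m}$ as an integer power and to avoid the square-root corrections in $\sigma(|D|)$, so the argument is special to $n=2m$.
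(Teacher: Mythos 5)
Your proof is correct and follows essentially the route the paper itself relies on: the lemma is not reproved here but cited from \cite[Lemma 5.6]{DSZ23}, and the paper's proof of Theorem \ref{MThm} is precisely your computation with the torsion term switched on --- your reduction $\sigma(PD|D|^{-n})=P(x)\,\sigma(D|D|^{-n})$ for a bundle endomorphism $P$, followed by the three-term decomposition and the vanishing at the centre of normal coordinates of $\sigma_0(D)$, of $\partial_x\|\xi\|_g^{-2m}$, and of $\sigma_{-2m-1}\bigl((D^2)^{-m}\bigr)$, is exactly the mechanism behind \eqref{D2T}--\eqref{wres-tor} specialised to $T=0$, and the ``parametrix bookkeeping'' you defer is \cite[Lemma A.1]{DSZ23}, which the paper invokes for the same purpose. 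The one point you should add: the lemma is stated for spin-c manifolds, so $\sigma_0(D)$ contains, besides the spin connection, the connection one-form $A_j$ of the auxiliary line bundle, which normal coordinates alone do not annihilate at $x_0$; choosing in addition a radial (synchronous) gauge with $A(x_0)=0$ --- legitimate because the residue density $\mathrm{tr}\,\sigma_{-n}$ is independent of the local trivialisation --- closes this small gap.
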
	
	
	\section{Examples}
	
	The  examples of spectral triples presented below are all significant cases of various noncommutative geometries to which we apply the proposed spectral definition of torsion and compute the torsion functional. 
	\subsection{Hodge-Dirac.}
	Let us start with a purely classical (commutative) example of a Hodge-Dirac spectral triple over
	an oriented, closed Riemannian manifold, where the Hilbert space are square-summable
	differential forms and the Dirac is $D=d+\delta$, where $\delta=\ast d \ast$, with
	$\ast$ being the Hodge star operator. In \cite{DSZ23a} we demonstrated that such
	spectral  triple is spectrally closed and therefore torsion-free.

	\subsection{Einstein-Yang-Mills.}
	
Consider a closed spin manifold $M$ of even dimension $n=2m$ and a spectral triple
$$\left(C^\infty(M) \otimes M_N(\mathbb{C}), \widetilde D, L^2(S) \otimes M_N(\mathbb{C})\right).$$ 
Here, the algebra $M_N(\mathbb{C})$ acts by left multiplications on $M_N(\mathbb{C})$ regarded as a Hilbert space with the scalar product $ \langle M,M' \rangle= \hbox{Tr}(M^*M')$ . 
Also, $$\widetilde D = D\otimes \hbox{id}_N + A + JAJ^{-1}$$ 
is a fluctuation of $D\otimes \hbox{id}_N$, where $D$ is the standard Dirac operator on $M$,
$$A\!=\!A^*\!\in \! \,\Omega^1_D = \bigl\{\alpha [D\otimes \hbox{id}_N,\beta] \;|\; \alpha,\beta\in C^\infty(M) \otimes M_N(\mathbb{C}) \bigr\},$$ 
and $J=C\ot *$, 
with $C$ being the charge conjugation on spinors in $S$.
Note that $A$ equals $i\gamma^a X_a$, where $X_a=-X_a^*\in C^\infty(M,M_N(\mathbb{C}))$ acts on 
$M_N(\mathbb{C})$ by left multiplication, while $JAJ^{-1}$ equals   $i\gamma^a$ composed with the right multiplication by $X_a^*$.
Therefore, the fluctuation $A + JAJ^{-1}$ amounts to the action  of $i\gamma^a \mathrm{Ad}_{X_a}$. It follows that the center of this algebra, $C^\infty(M,\IC 1_N)$, acts trivially, 
so we just can consider $X_a$ such that $\langle X_a,1_N \rangle=0$, that is, traceless 
matrix valued functions, $X_a\in C^\infty(M,su(N))$.   
Moreover, we need only the expansion of $X_a$ and so of the operator $i\gamma^a \mathrm{Ad}_{X_a}$ in normal coordinates up to $o({\bf 1})$.

It is known that this spectral triple is finitely summable and regular. 
To compute the torsion functional for $\widetilde D$ we start 
(slightly abusing the notation) with its symbol expanded as:
\begin{equation} 
	\sigma(\widetilde D) = \gamma^a \bigl(-\xi_a  
	{ + \, i \,\mathrm{Ad}_{X_a} \bigr)   + o({\bf 1})}.
\end{equation}
	It is not difficult to see that the bimodule $\Omega^1_{\widetilde D}$ spanned by 
	$\alpha[\widetilde D,\beta ]$, $\alpha,\beta\in C^\infty(M,M_N(\mathbb{C}))$,
	is also spanned just by the elements $u=\gamma^a v_a$, 
	where we can expand $v_a\in M_N(\mathbb{C})+o({\bf 1})$.
	%%%%%%%%%%%%%%%%%%%%%%%%%%%%%%%%%%%%%%%%%%%%%%%%%%
	%%%%%%%%%%%%%%%%%%%%%%%%%%%%%%%%%%%%%%%%%%%%%%%%%%	
	We now compute the symbols of $ \sigma(uvw \widetilde D) = 	\mathfrak{s}_1  + 	\mathfrak{s}_0$ for $u,v,w\in \Omega^1_{\widetilde D}$: 
	%$\hat u \hat v \hat w \widetilde D$ as
	\begin{equation}
		\begin{aligned} 	 
			\mathfrak{s}_1 &= 
			-  \gamma^a \gamma^b \gamma^c \gamma^d u_av_bw_c \,\xi_d, \\	 
			\mathfrak{s}_0 &= 
			i\gamma^a \gamma^b \gamma^c \gamma^d u_av_bw_c \,\mathrm{ad}_{X_d}	.	 
		\end{aligned} \label{uvwD}
	\end{equation}	
	
	Next, to compute the two leading symbols  of $\widetilde D^{-2m}$,
	$\sigma(\widetilde D^{-2m}) =\mathfrak{c}_{2m} + \mathfrak{c}_{2m+1}$,
	we need the two leading  symbols of $\widetilde D^2$, 
	$\sigma(\widetilde D^{2}) = {\mathfrak a}_2 + {\mathfrak a}_1 + \ldots$,
	\begin{equation} \label{tildeD2T}
		\begin{aligned}
			{\mathfrak a}_2 =&  ||\xi||^2  + o({\bf x}), \\
			%%%%%%%%%%%%%%%%%%%%%%%%%%%%%%%%%%%%%%%%%%%%%%%%%%%%%		
			{\mathfrak a}_1 = & 
			{ -2 \mathrm{Ad}_{X_a}\xi_a  + o(\bf 1),		}
		\end{aligned}	
	\end{equation}
	and the two leading symbols of $\widetilde D^{-2}$,
	$\sigma(\widetilde D^{-2}) = {\mathfrak b}_2 + {\mathfrak b}_3 + \ldots$,
	\begin{equation} \label{tildeD-2T}
		\begin{aligned}
			{\mathfrak b}_2 =&  ||\xi||^{-2}  + o({\bf x}), \\
			%%%%%%%%%%%%%%%%%%%%%%%%%%%%%%%%%%%%%%%%%%%%%%%%%%%%%		
			{\mathfrak b}_3 = & 
			+ {2 \mathrm{Ad}_{X_a}\xi_a||\xi||^{-4}  + o(\bf 1).		}
		\end{aligned}	
	\end{equation}
	Then, using \cite[Lemma A.1]{DSZ23} we have:%
	\begin{equation} 
		\begin{aligned}
			&\mathfrak c_{2m} = ||\xi||^{-2m} + o(\bf x),\\
			%%%%%%%%%%%%%%%%%%%%%%%%%%%%%%%%%%%%%%%%%%%%%%%%%%%%%	
			%%%%%%%%%%%%%%%%%%%%%%%%%%%%%%%%%%%%%%%%%%%%%%%%%%%%%
			&\mathfrak c_{2m+1}= 
			{ n \mathrm{Ad}_{X_a}\xi_a||\xi||^{-n-2}.}
		\end{aligned}\label{symD-2m}
	\end{equation}		
	Therefore, since $\partial_a \mathfrak c_{2m}(0)=0$,
	$$
	\sigma_{-n} \left(uvw\widetilde D|\widetilde D|^{-n}\right)=
	\mathfrak s_{0}\mathfrak c_{n}+\mathfrak s_{1}\mathfrak c_{n+1},
	%-i \partial^\xi_a\mathfrak s_{1}\partial_a \mathfrak c_{n},
	$$
	where both terms on the right hand side contain linearly  $\mathrm{ad}_{X_a}$.
	It turns out however, that the trace of $\mathrm{ad}_{X}$, $X\in M_N(\mathbb{C})$ , considered
	as a linear operator on $M_N(\mathbb{C})$ vanishes. This could be checked explicitly, using the
	basis $E_{\alpha\beta}$, $\alpha,\beta=1,\ldots,N$ of $M_N(\mathbb{C})$, where the
	matrix  $E_{\alpha\beta}$  has the $\alpha,\beta$  matrix entry $1$ and otherwise $0$. 
	We compute:
	$$
	Ad_{E_{\mu\nu}}  E_{\alpha\beta} =
	E_{\mu\beta}\delta_{\alpha\nu}-E_{\alpha\nu}\delta_{\mu\beta}=
	K_{\mu\nu;\alpha\beta,\rho\tau} E_{\rho\tau},$$
	where $K_{\mu\nu}$ as a matrix is,
	$$K_{\mu\nu;\alpha\beta,\rho\tau}= 
	\delta_{\mu\rho}\delta_{\beta\tau}\delta_{\nu\alpha}
	- \delta_{\alpha\rho}\delta_{\nu\tau}\delta_{\mu\beta}.
	$$                                           ,	Thus,           ,                ,                ,
	$$ \hbox{Tr}\,  \mathrm{ad}_{E_{\mu\nu}} = K_{\mu\nu;\alpha\beta,\alpha\beta}=
	N(\delta_{\mu\alpha}\delta_{\nu\alpha}
	-\delta_{\nu\alpha}\delta_{\mu\alpha})=0.$$
	Therefore, the spectral torsion functional 
	${\mathcal T}(u,v,w)$ for $\widetilde{D}$ vanishes and so 
	Einstein-Yang-Mills spectral triple is torsion free.
\subsection{Almost commutative $\bf M \times \mathbb{Z}_2$}
We assume that $M$ is a closed spin manifold and $(C^\infty(M), D, \cH)$ is an even spectral triple  of  dimension $n$  with the standard Dirac operator $D$ and a 
grading $\chi$. We know that it is spectrally closed, so, in particular, $D$ is torsion-free. We consider the usual double-sheet spectral triple, 
		$(C^\infty(M)\otimes  \IC^2, {\mathcal D}, \cH \otimes \IC^2)$, 
with the Dirac operator,  
		$$  {\mathcal D} = \left(
		\begin{array}{cc} D & \chi \Phi \\ \chi \Phi^* & 
			D \end{array} \right), $$
where $\Phi \in \mathbb{C}$.             
 The bimodule of one forms, associated to ${\mathcal D}$ consists of the following operators,
		\begin{equation}
			\omega= \begin{pmatrix}
				w^+ &  \Phi \chi f^+ \\ \Phi^* \chi f^- & w^-
			\end{pmatrix},
		\end{equation}                       
		where $w^\pm \in \Omega^1_D(M)$ and $ f^\pm \in C^\infty(M)$.  We shall
		denote the diagonal part as $\omega^d$ and the off-diagonal part as $\omega^o$.
		
		In order to  calculate $\wres(\omega_1 \omega_2 \omega_3 \mathcal D \mathcal{D}^{-2m})$ for  one-forms  $\omega_1,\omega_2,\omega_3$
		let us observe that due to linearity it suffices to do it separately for diagonal and off-diagonal parts. Moreover,  the algebra rules in this extended Clifford algebra	are obvious and we restrict ourselves to the four possible cases.  Furthermore, since  $\mathcal{D}^{-2m}=D^{-2m}\mathbf 1(1-m |\Phi|^2D^{-2}+\dots)$ we  note that  in the Wodzicki residue formula $\mathcal{D}^{-2m}$ can be replaced by $D^{-2m}$. 
		
		By a straightforward calculation we get,                                                                                                                                                                                                                                                              
		\begin{equation}
			\begin{aligned}
				\wres \bigl( \omega_1^d \omega_2^d \omega_3^d \mathcal{D}	\mathcal{D}^{-2m} \bigr) & = 0, \\
				%%%%%%%%%%
				\wres \bigl( \omega_1^d \omega_2^d \omega_3^o	 \mathcal{D}\mathcal{D}^{-2m} \bigr) & = 
				\wres \bigl( |\Phi|^2 (w_1^+ w_2^+ f_3^+  + w_1^- w_2^- f_3^-) D^{-2m} \bigr) \\ 
				& = |\Phi|^2 \bigl( \mathcal{g}(w_1^+,w_2^+) f_3^+ +  \mathcal{g}(w_1^-,w_2^-) f_3^-  \bigr),\\
				%%%%%%%%%%
				\wres \bigl( \omega_1^d \omega_2^o \omega_3^o  \mathcal{D}	\mathcal{D}^{-2m} \bigr) & = 
				\wres \bigl( |\Phi|^2 (w_1^+ f_2^+ f_3^-  + w_1^- f_2^- f_3^+) D D^{-2m} \bigr) = 0, \\
				%%%%%%%%%%
				\wres \bigl( \omega_1^o \omega_2^o \omega_3^o	 \mathcal{D}\mathcal{D}^{-2m} \bigr) & = 
				\wres \bigl( |\Phi|^4(f_1^+ f_2^-  f_3^+  + f_1^- f_2^+ f_3^-)  D^{-2m} \bigr) \\
				& = |\Phi|^4 \mathcal{V}(f_1^+ f_2^-  f_3^+  + f_1^- f_2^+ f_3^-),
			\end{aligned}
		\end{equation}                                                      
{
where $\mathcal{g}(w_1,w_2)$ is proportional to 
$\int_M g(w_1,w_2)vol_g$
	%the metric functional on $M$ and the functional 
and	$\mathcal{V}(f)$ is proportional to $\int_M f vol_g$. 
}	
	Note that the first equation is a consequence of  $D$ being torsion-free,
	whereas the third equation holds if  $\wres(\omega D D^{-2m})=0$, which is the consequence
	of the spectral triple on $M$ being spectrally closed.
	                                     
As a consequence, we see that the product of the standard spectral triple over $M$ with
a finite discrete one does not satisfy the torsion-free condition unless $\Phi=0$. 
Note that the generalisation of the model with $\Phi$ being a complex-valued function
on $M$ does not change the conclusion.                                             
	
	\subsection{Conformally rescaled noncommutative tori}
	
	Consider the spectral triple on the noncommutative $n$-torus,
	$({\mathcal A }, {\mathcal H}, D_{k})$, where 
	$\mathcal A = C^\infty(\mathbb{T}^n_\theta)$ 
	is the	$\delta _{j}$-smooth subalgebra for the standard derivations $\delta_{j}$, $j=1,\dots,n$. Next, ${\mathcal H}= L^2(\mathbb{T}^n_\theta,\tau)$, where $\tau$ is the standard trace which annihilates $\delta _{j}$.
	Also, $D_{k}=kDk$ is the conformal rescaling of the standard (flat) Dirac operator $D=\sum _{j} \gamma ^{j} \delta _{j}$  with $\gamma ^{j}$ being the usual gamma matrices and the conformal 	factor $k>0$ is  from the copy ${\mathcal A}^{o}$ of 
	${\mathcal A}$  in the commutant of $\mathcal A$.  
	The bimodule of one-forms, generated by the commutators $[D_{k},{\mathcal a}]$, ${\mathcal a} \in {\mathcal A}$, is a free left module generated by $ k^{2} \gamma ^{j}$.  Using the calculus of pseudodifferential operators over noncommutative tori, following  \cite{CoMo14}, 	to $\hat{{\mathcal A}}$-valued symbols  (where $\hat{{\mathcal A}}$ is the algebra generated by ${\mathcal{A}}$
	and  ${\mathcal {A}^o}$) and of the analogue of the Wodzicki residue, we defined and computed various spectral functionals, as the metric and Einstein functionals \cite[Section 5.2.1]{DSZ23} in dimension 2 and 4. In particular, we demonstrated there that strictly irrational noncommutative 2- and 4-dimensional tori with a conformally rescaled Dirac operator are spectrally	closed. We extend here this result to any  dimension:
	\begin{thm}
		The spectral triple $\bigl(C^\infty(\mathbb{T}^n_\theta), {\mathcal H}, D_{k}\bigr)$
		for strictly irrational noncommutative torus of dimension $n$ is spectrally closed, that is for any $T$, operator of order $0$ (that is, a polynomial in 
		$a, [D_k,b]$, $a,b \in \mathcal{A}$)
		$$ \wres\bigl(T D_k |D_k|^{-n} \bigr)= 0.$$
	\end{thm}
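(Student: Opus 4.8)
The plan is to follow the same symbol-calculus route used for the manifold theorem, adapted to the $\hat{\mathcal A}$-valued pseudodifferential calculus of \cite{CoMo14,DSZ23}, and to reduce the entire statement to a single universal computation. First I would record that, because the generators of zero-order operators are $a$ and $[D_k,b]=k^2\gamma^j\delta_j b$, every such $T$ has a symbol $\sigma(T)$ that is \emph{independent of $\xi$} and lies in the algebra generated by $\hat{\mathcal A}$ and the $\gamma^j$. Since the composition formula only involves $\xi$-derivatives of the left factor, it follows that $\sigma(TP)=\sigma(T)\,\sigma(P)$ exactly, hence
\[
\sigma_{-n}\bigl(T D_k|D_k|^{-n}\bigr)=\sigma(T)\,\sigma_{-n}\bigl(D_k|D_k|^{-n}\bigr).
\]
Consequently $\wres(TD_k|D_k|^{-n})=\tau\bigl(\mathrm{Tr}_{Cl}\bigl(\sigma(T)\,\bar Q\bigr)\bigr)$, where $\bar Q:=\int_{\|\xi\|=1}\sigma_{-n}(D_k|D_k|^{-n})\,d\xi$ is a \emph{universal} element of $\hat{\mathcal A}\otimes\mathrm{Cl}$, and the theorem becomes the assertion that $\bar Q$ pairs trivially with every admissible $\sigma(T)$ under the trace.

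Second, the odd-dimensional case I would dispatch by parity. Each homogeneous component of every symbol in sight is a finite sum of monomials $\xi_{i_1}\cdots\xi_{i_p}\|\xi\|^{-2q}$ with coefficients in $\hat{\mathcal A}\otimes\mathrm{Cl}$ carrying no $\xi$-dependence; such a term of integer degree $d=p-2q$ satisfies $\sigma(-\xi)=(-1)^d\sigma(\xi)$. Thus $\sigma_{-n}$ has parity $(-1)^n$, and for odd $n$ it is odd in $\xi$, so its integral over the unit sphere vanishes term by term and $\bar Q=0$ outright.

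Third, for even $n$ the genuine computation begins: compute $\sigma(D_k)$ and $\sigma(D_k^2)$ (the leading symbol being $\|\xi\|^{2}k^{4}$, with the subleading pieces carrying $\delta_j k$), then $\sigma(|D_k|^{-n})$ to the two relevant orders via the Cauchy-integral/parametrix construction for $(D_k^2-\lambda)^{-1}$, and finally extract the order $-n$ part of $D_k|D_k|^{-n}$. The non-trivial point is that $k$ does not commute with the symbols, so inverting $\sigma(D_k^2)$ requires the rearrangement (Connes--Tretkoff) lemma, which encodes the $k$-dependence through $n$-dependent modular functions applied to $\delta_j k$. After the Clifford trace and the integration over $\|\xi\|=1$, one is left with $\bar Q$ written as an explicit finite combination of such modular expressions in $k$ and $\delta_j k$.

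Finally, I would show that $\tau(\mathrm{Tr}_{Cl}(\sigma(T)\bar Q))=0$ for all admissible $T$: using the trace property of $\tau$ to cyclically reorder the factors of $k$, the rearrangement functions collapse, and the surviving terms assemble into total $\delta$-derivatives, which $\tau$ annihilates since $\tau\circ\delta_j=0$, while strict irrationality of $\theta$ rules out the only potential obstruction, namely the central/constant contributions that the trace does not see. This reproduces the $n=2,4$ results of \cite{DSZ23} as special cases. I expect the main obstacle to be exactly this last step for general even $n$: controlling the $n$-dependent modular functions produced by the rearrangement lemma and verifying \emph{uniformly in $n$} that their sphere-integrated, Clifford-traced combinations reduce to total derivatives rather than leaving a genuine residue.
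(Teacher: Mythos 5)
Your first step is sound and is implicitly where the paper also starts: $\sigma(T)$ is of order exactly $0$ and $\xi$-independent, so $\sigma_{-n}(TD_k|D_k|^{-n})=\sigma(T)\,\sigma_{-n}(D_k|D_k|^{-n})$. But both routes you then take have genuine problems. The parity argument for odd $n$ is false: $\sigma_{-n}(D_k|D_k|^{-n})$ is \emph{even} in $\xi$ for every $n$. Writing $\sigma(D_k)=\mathfrak d_1+\mathfrak d_0$ and $\sigma(|D_k|^{-n})=\mathfrak c_{-n}+\mathfrak c_{-n-1}+\cdots$, one has $\sigma_{-n}(D_k|D_k|^{-n})=\mathfrak d_1\mathfrak c_{-n-1}+\mathfrak d_0\mathfrak c_{-n}+\partial_{\xi_i}\mathfrak d_1\,\delta_i\mathfrak c_{-n}$, a sum of products of two odd-in-$\xi$ or two even-in-$\xi$ factors. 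The point is that each derivation $\delta_j$ lowers the homogeneity degree by one \emph{without} introducing a factor of $\xi_i$, so homogeneity degree and $\xi$-parity decouple (and for odd $n$ odd powers of $\|\xi\|$ occur, so your monomials are not all of the form $\xi_{i_1}\cdots\xi_{i_p}\|\xi\|^{-2q}$ with $p-2q=d$). Hence the sphere integral does not vanish term by term, and the odd case is not dispatched.

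For even $n$ your plan stops exactly where the work is, and as stated the closing mechanism cannot work: a ``total $\delta_j$-derivative'' would have to include the factor $\sigma(T)$, which contains arbitrary elements of $\mathcal A$ that you cannot integrate by parts against. The missing idea --- and the actual role of strict irrationality, which you invoke only vaguely --- is the \emph{factorization of the trace} $\tau$ on the algebra generated by $\mathcal A$ and $\mathcal A^o$. Since $D_k|D_k|^{-n}$ is built from $k\in\mathcal A^o$ alone, every term of $\sigma_{-n}(D_k|D_k|^{-n})$ has $\mathcal A^o$-part of the form $k^\alpha\delta_j(k)k^\beta$ (exactly one derivation falls somewhere, and there is nothing but $k$ for it to hit), and factorization reduces the whole theorem, uniformly in $n$, in all parities, and with no rearrangement lemma or explicit parametrix expansion, to the identity $\tau\bigl(k^\alpha\delta_j(k)k^\beta\bigr)=0$. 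That identity follows from cyclicity and the substitution $k=e^h$: the trace equals $\tau\bigl(e^{(\alpha+\beta)h}\delta_j(e^h)\bigr)$, which is $\frac{1}{\alpha+\beta+1}\tau\bigl(\delta_j(e^{(\alpha+\beta+1)h})\bigr)=0$ for $\alpha+\beta\neq-1$ and $\tau(\delta_j(h))=0$ otherwise. The heavy modular-function computation you outline, and in particular the step you explicitly leave open, is precisely what this observation replaces.
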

	\begin{proof}
		The proof  is based on a simple observation that the symbol of order $-n$ of 
		$T D_k |D_k|^{-n}$ is a sum of terms that are of the form 
		$Y_{\alpha, j,\beta}(\xi) k^\alpha \delta_j(k) k^\beta$,  for $j=1,\ldots,n$,
		some integers  $\alpha,\beta$  and $C^\infty(\mathbb{T}^n_\theta)$-valued 
		functions $Y_{\alpha,j,\beta}(\xi)$, which 
		are homogeoneous in $\xi$ of degree $-n$.  Since for strictly irrational tori the trace
		over the full algebra $\hat{\mathcal A}$ factorizes,  it is sufficient
		to show that $\tau(k^\alpha \delta_j(k) k^\beta) =0$ for any $j,\alpha,\beta$.
		For that we write $k = e^{h}$ and observe that
		$$ \tau(k^\alpha \delta_j(k) k^\beta) = \tau\bigl(e^{(\alpha+\beta)h} \delta_j(e^{h})\bigr)$$ vanishes since for $\alpha+\beta \not= -1$
		it equals 
		$$\frac{1}{\alpha+\beta+1}\tau\bigl(\delta_j(e^{(\alpha+\beta+1)h})\bigr)$$ while for 
		$\alpha+\beta=-1$ it equals $\tau\bigl(\delta_j(h))\bigr)$.
		This ends the proof.
	$\qquad\Box$
	\end{proof}
	As an immediate consequence of spectral closedness, we have: 
\begin{coro}
	The Dirac operator $D_k$ over the $n$-dimensional strictly irrational noncommutative torus $\mathbb{T}^n_\theta$ is torsion free.
\end{coro}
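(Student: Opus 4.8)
The plan is to deduce the vanishing of the torsion functional directly from the spectral closedness established in the theorem immediately above, exploiting the general implication (noted in the paper right after the definition of spectral closedness) that a spectrally closed spectral triple is automatically torsion-free. Concretely, recall from Definition \ref{TorWres} that the torsion functional is $\mathcal{T}(u,v,w) = \wres(u v w\, D_k |D_k|^{-n})$ for one-forms $u,v,w \in \Omega^1_{D_k}(\mathcal{A})$, and that $D_k$ is said to be torsion-free precisely when $\mathcal{T}$ vanishes identically. The corollary is therefore a purely formal consequence once the appropriate class of operators is identified.

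First I would observe that each of $u,v,w$ lies in the bimodule $\Omega^1_{D_k}(\mathcal{A})$ generated by $a$ and $[D_k, b]$ with $a,b \in \mathcal{A}$, and that by definition these are bounded operators of order $0$. Hence their product $P := u v w$ is again an operator of order $0$ belonging to the algebra generated by $\mathcal{A}$ and $\Omega^1_{D_k}(\mathcal{A})$ — exactly the class of operators appearing in the definition of spectral closedness. The theorem above asserts that the conformally rescaled triple $\bigl(C^\infty(\mathbb{T}^n_\theta), \mathcal{H}, D_k\bigr)$ over a strictly irrational torus is spectrally closed, so $\wres(P\, D_k |D_k|^{-n}) = 0$ for every such $P$. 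Taking $P = u v w$ yields $\mathcal{T}(u,v,w) = 0$ for all one-forms, which is precisely the torsion-free condition.

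Since the entire computational burden was already discharged in proving the theorem — reducing the symbol of order $-n$ of $T D_k |D_k|^{-n}$ to a sum of terms $T_{\alpha,j,\beta}(\xi)\, k^\alpha \delta_j(k) k^\beta$ and showing that $\tau\bigl(k^\alpha \delta_j(k) k^\beta\bigr) = 0$ via the substitution $k = e^{h}$ — there is no genuine obstacle remaining at the level of the corollary. The only point worth making explicit is that the product of three one-forms is indeed an admissible zero-order operator in the algebra generated by $\mathcal{A}$ and $\Omega^1_{D_k}(\mathcal{A})$, so that the spectral closedness statement applies to it verbatim; everything else is immediate from the definitions.
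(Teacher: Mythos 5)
Your argument is exactly the paper's: the corollary is stated as an immediate consequence of the spectral closedness theorem, with the torsion functional $\mathcal{T}(u,v,w)=\wres(uvw\,D_k|D_k|^{-n})$ vanishing because $uvw$ is a zero-order operator in the algebra generated by $\mathcal{A}$ and $\Omega^1_{D_k}(\mathcal{A})$, to which the spectral closedness condition applies verbatim. Your explicit remark that the product of three one-forms is an admissible zero-order operator is the only (minor) detail the paper leaves implicit; the proof is correct.
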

	\subsection{Quantum  $SU(2)$. } 
	\ \\
	We briefly  recall the construction of the spectral triple $\bigl(\mathcal{A}(SU_q(2)),\mathcal{H},D\bigr)$
	of \cite{DLSSV04}. Let $\mathcal{A} = \mathcal{A}(SU_q(2))$ be the $*$-algebra generated	by $a$ and~$b$, subject to the  commutation rules:
	\begin{equation}
		\begin{aligned}
			ba = q ab,  \qquad  b^*a = qab^*, \qquad bb^* = b^*b, \\
			a^*a + q^2 b^*b = 1,  \qquad  aa^* + bb^* = 1,
			\label{eq:suq2-relns}
		\end{aligned}
	\end{equation}
	where $0 < q < 1$.  The 
	Hilbert space $\mathcal{H}$ has an orthonormal basis  $\ket{j\mu n\up}$ for $j = 0,\half,1,\dots$,
	$\mu = -j,\dots,j$ and $n = -j^+,\dots,j^+$; together with
	$\ket{j\mu n\dn}$ for $j=\half,1,\dots$, $\mu = -j,\ldots,j$ and
	$n = -j^-,\dots,j^-$, where $j^+ = j+ \half$ and $j^- = j -\half$. For the explicit formulas defining the representation 
 of $\mathcal{A}$ on $\mathcal{H}$ 
	we refer to \cite{DLSSV04} or \cite{DLSSV05}. The equivariant Dirac
	operator is chosen to be isospectral to the classical Dirac over $SU(2)$;
	\begin{equation}
		D \ket{j\mu n \up} = (2j + \sesq) \ket{j\mu n \up}, \qquad 
		D \ket{j\mu n \dn} = - (2j + \half)  \ket{j\mu n \dn}.
		\label{suq2-Dirac}
	\end{equation}
	In \cite{DLSSV05} it was shown that the triple is regular and its dimension spectrum of the spectral triple was determined, together with a an explicit formula that computes respective noncommutative residue. In particular,	the following theorem was proven.
	
	\begin{thm}
		If $T \in \Psi^0(\mathcal{A})$  is in the algebra of pseudodifferential operators of order $0$ (as defined in \cite{DLSSV05}) then for $P^\up, P^\dn$, projections on the respective up and down part of the Hilbert space of spinors, the noncommutative integral of $P^{(\up,\dn)}T |D|^{-2}$ can be explicitly computed (see \cite{DLSSV05}, Theorem 4.1) as:
		\begin{equation}
			\begin{aligned}                                                                                                                                                                                                                                                                                   
				\ncint {P^\dn }T |D|^{-2} &= \bigl(\tau_1 \ox \tau_0^\up +  \tau_0^\dn \ox \tau_1\bigr) \bigl(\beta(T)\bigr),
				\\
				\ncint {P^\up} T |D|^{-2} &= \bigl(\tau_1 \ox  \tau_0^\dn
				+\tau_0^\up  \ox \tau_1\bigr) 
{				\bigl(\beta(T)\bigr),	}
			\end{aligned}
			\label{residues}            
		\end{equation}
		where the noncommutative ingeral is, as usually,
		$$
		\ncint  T := \Res_{z=0} \mathrm{Tr} (T |D|^{-z}).
		$$
		The map 
{		$\beta$ }
is a projection on the two first legs of the zero-degree part of a 
		map $\rho$, which is a homomorphism 
		\begin{equation}
			\rho: \mathcal{B} \to \mathcal{A}(D^2_{q}) \ox \mathcal{A}(D^2_{q}) \ox \mathcal{A}(\mathbb{S}^1),
			\label{eq:symbol-map-bis}
		\end{equation}
		where $\mathcal{B}$ is a subalgebra of  $\Psi^0(\mathcal{A})$ generated by diagonal
		parts of the generators $a,b$ and their commutators with $D$ and  $|D|$,   
		and $ \mathcal{A}(D^2_{q})$ are quantum discs. Using the symbol map $\sigma: 
		\mathcal{A}(D^2_{q}) \to \mathcal{A}(\mathbb{S}^1)$ and the standard irreducible 
		representation of quantum discs $\pi$ the functionals $\tau_1, \tau_0^\up, \tau_0^\dn$
		are defined for $x \in \mathcal{A}(D^2_{q})$, as
		\begin{equation}
			\begin{aligned}
				\tau_1(x) &:= \frac{1}{2\pi} \int_{S^1} \sigma(x),
				\\
				\tau_0^\up(x) &:= \lim_{N\to\infty} \Tr_N \pi(x) - (N+\sesq) \tau_1(x),
				\\
				\tau_0^\dn(x) &:= \lim_{N\to\infty} \Tr_N \pi(x) - (N+\half) \tau_1(x).
			\end{aligned}
		\end{equation}
	\end{thm}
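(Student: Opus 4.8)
\noindent\emph{Proof sketch.}\ \ The plan is to turn the noncommutative integral into a genuine residue of a spectral zeta function and then to strip it down, direction by direction, to the three commuting ``symbol'' coordinates carried by the homomorphism $\rho$. First I would unfold the definition of $\ncint$. Since $\ncint X = \Res_{z=0}\Tr(X|D|^{-z})$, setting $X = P^\up T|D|^{-2}$ and substituting $s = z+2$ gives
$$ \ncint P^\up T|D|^{-2} = \Res_{s=2}\Tr\bigl(P^\up T|D|^{-s}\bigr), $$
so the problem is to read off the \emph{subleading} pole of the zeta function of $T$ on the up-sheet, the leading pole sitting at the metric dimension $s=3$. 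Because $P^\up$ and $|D|^{-s}$ are diagonal in the eigenbasis, only the diagonal matrix elements of $T$ survive, and the eigenvalue rule $D\ket{j\mu n\up}=(2j+\sesq)\ket{j\mu n\up}$ turns the trace into a double sum: an outer sum over $j$ weighted by $(2j+\sesq)^{-s}$ and an inner sum over the multiplicity indices $\mu,n$ of $\langle j\mu n\up|T|j\mu n\up\rangle$.

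Second, I would reduce $T$ to its symbol. The diagonal of a product of generators is \emph{not} the product of their diagonals, so the key technical input is the regular pseudodifferential calculus of \cite{DLSSV05}: modulo operators that the residue annihilates, every $T\in\Psi^0(\cA)$ may be replaced, without changing the residue, by an element of the diagonal subalgebra $\cB$, on which the homomorphism $\rho:\cB\to\cA(D^2_q)\ox\cA(D^2_q)\ox\cA(\mathbb{S}^1)$ is defined. This is the step that factorizes the inner sum: the two quantum-disc legs correspond to the two ``radial'' quantum numbers (combinations such as $j\pm\mu$ and $j\pm n$ that run off to infinity as $j\to\infty$), on which the shift action of $a,b$ is exactly that of a quantum disc, while the circle leg records the remaining angular variable, on which $\rho$ acts through the boundary symbol $\sigma$.

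Third, with $T$ replaced by $\rho(T)$ the inner multiplicity sum becomes a product of truncated disc traces $\Tr_N\pi(\,\cdot\,)$ at truncation level $N\sim 2j$, whose large-$N$ behaviour is governed precisely by the two functionals in the statement, $\Tr_N\pi(x)=(N+c)\,\tau_1(x)+\tau_0^{(\up,\dn)}(x)+o(1)$, with the constant $c$ fixed by the multiplicity ranges $n=-j^\pm,\dots,j^\pm$. Feeding this expansion into the outer sum $\sum_j(2j+\sesq)^{-s}$ and comparing pole orders, the residue at $s=2$ is one degree subleading relative to $s=3$: it therefore selects exactly the cross terms in which one disc contributes its leading coefficient $\tau_1$ and the other its regularized constant $\tau_0$, summed over the two ways this can happen. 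This produces the combination $\tau_1\ox\tau_0^\up+\tau_0^\dn\ox\tau_1$ on the up-sheet (and its $\dn$-counterpart), the $\up/\dn$ labels and the choice of leg being dictated by whether the surviving multiplicity range is $j^+$ or $j^-$; the truncation to $\beta(T)$, resp.\ $\beta^0(T)$, is simply the extraction of the zero-degree part of $\rho(T)$ on the two disc legs.

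The main obstacle is the second step: controlling the diagonal of $T$ modulo the residue-negligible operators and proving that it is faithfully captured by $\rho$ on the two interlocking disc directions. Everything downstream is careful but mechanical bookkeeping of subleading coefficients; the delicate point is that this is a \emph{subleading} residue, so one must retain the $O(1)$ terms of the disc traces and correctly attribute the shift constants $\sesq$ and $\half$ to the two sheets, which is where the asymmetry between $\tau_0^\up$ and $\tau_0^\dn$ enters.
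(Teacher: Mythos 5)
This theorem is not proved in the paper at all: it is recalled verbatim from \cite{DLSSV05} (``In \cite{DLSSV05} it was shown\dots the following theorem was proven''), so there is no in-paper argument to compare yours against. That said, your sketch correctly reconstructs the architecture of the original proof: rewriting $\ncint P^{\up}T|D|^{-2}$ as $\Res_{s=2}\Tr(P^{\up}T|D|^{-s})$, replacing $T$ by an element of $\mathcal{B}$ modulo operators with vanishing residues, factorizing the multiplicity sum over $\mu,n$ into two truncated quantum-disc traces with expansion $\Tr_N\pi(x)=(N+c)\tau_1(x)+\tau_0(x)+o(1)$, and observing that the subleading pole at $s=2$ selects precisely the cross terms $\tau_1\ox\tau_0+\tau_0\ox\tau_1$, with the $\sesq$ versus $\half$ shifts accounting for the $\up/\dn$ asymmetry. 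The one substantive caveat is the one you flag yourself: the reduction of an arbitrary $T\in\Psi^0(\mathcal{A})$ to its ``cosphere-bundle'' symbol $\rho(T)$ --- i.e., showing that the off-diagonal parts of the generators and the remainder terms contribute only to operators annihilated by the residue --- is the technical core of \cite{DLSSV05} and is asserted rather than established in your sketch; similarly, the precise definitions of $\beta$ and $\beta^0$ (projection onto the first two legs of the degree-zero part of $\rho$, with the up/down distinction) need the explicit bookkeeping of that reference. As a blind reconstruction of the proof strategy, however, it is accurate and identifies the right delicate points.
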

	
Using this  theorem we can state as a simple corollary,
	\begin{coro}
		The standard equivariant spectral triple over $\mathcal{A}(SU_q(2))$ is {\em spectrally
			closed} and, in particular, the Dirac operator \eqref{suq2-Dirac} is {\em torsion-free}.
	\end{coro}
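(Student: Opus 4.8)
The plan is to prove the stronger property of \emph{spectral closedness}, which by the definition recalled above immediately forces the torsion functional to vanish. Thus I must show that $\wres\bigl(T\,D|D|^{-3}\bigr)=0$ for every order-zero operator $T$ generated by $a,[D,b]$, the relevant summability dimension of $\mathcal{A}(SU_q(2))$ being $n=3$. The natural first step is to exploit the explicit spectrum \eqref{suq2-Dirac}: since $D$ acts as $+(2j+\sesq)$ on the up-sector and as $-(2j+\half)$ on the down-sector, while $|D|$ is the corresponding positive multiple, one reads off
\[ D|D|^{-3}=(P^\up-P^\dn)\,|D|^{-2}. \]
Consequently the torsion density $T\,D|D|^{-3}$ has order $-2$, and $\wres=\ncint$ picks up exactly the subleading residue evaluated in the Theorem above.

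Next I would reduce $\wres\bigl(T\,D|D|^{-3}\bigr)$ to the two residues in \eqref{residues}. Expanding and using that $\wres$ is a trace together with $[P^\up,|D|]=[P^\dn,|D|]=0$, each projection may be moved cyclically to the front, giving
\[ \wres\bigl(T\,D|D|^{-3}\bigr)=\ncint\bigl(P^\up T|D|^{-2}\bigr)-\ncint\bigl(P^\dn T|D|^{-2}\bigr). \]
Both terms are now precisely of the form computed by the Theorem, so substituting its two formulas reduces the claim to the vanishing of $\bigl(\tau_1\ox\tau_0^\up+\tau_0^\dn\ox\tau_1\bigr)\bigl(\beta(T)\bigr)-\bigl(\tau_1\ox\tau_0^\dn+\tau_0^\up\ox\tau_1\bigr)\bigl(\beta^0(T)\bigr)$.

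The decisive algebraic input is the identity $\tau_0^\up-\tau_0^\dn=-\tau_1$, immediate from the definitions since these functionals differ only through the coefficient $(N+\sesq)$ versus $(N+\half)$. Granting that $\beta(T)=\beta^0(T)$ --- both being the projection onto the first two legs of the degree-zero part of the symbol map $\rho$ --- the passage from the up-formula to the down-formula is exactly the exchange $\tau_0^\up\leftrightarrow\tau_0^\dn$ in the two legs, whence the difference telescopes to $-(\tau_1\ox\tau_1)\bigl(\beta(T)\bigr)+(\tau_1\ox\tau_1)\bigl(\beta(T)\bigr)=0$. This establishes spectral closedness, and therefore torsion-freeness of \eqref{suq2-Dirac}.

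I expect the main obstacle to be the identification $\beta(T)=\beta^0(T)$ together with the precise leg-bookkeeping. Making it rigorous requires unwinding the homomorphism $\rho\colon\mathcal{B}\to\mathcal{A}(D^2_q)\ox\mathcal{A}(D^2_q)\ox\mathcal{A}(\mathbb{S}^1)$ of \eqref{eq:symbol-map-bis} and confirming that, after extracting the degree-zero part and discarding the $\mathcal{A}(\mathbb{S}^1)$ leg, the up- and down-residues are controlled by the same element of $\mathcal{A}(D^2_q)\ox\mathcal{A}(D^2_q)$. All remaining manipulations are formal consequences of the tracial property of $\wres$, so it is this structural comparison of the two symbols, resting on the DLSSV calculus \cite{DLSSV05}, that carries the genuine content.
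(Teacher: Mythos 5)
Your proposal is correct and follows essentially the same route as the paper: write $D|D|^{-3}=(P^\up-P^\dn)|D|^{-2}$, apply the two residue formulas \eqref{residues}, and cancel via $\tau_0^\up-\tau_0^\dn=-\tau_1$. You are in fact slightly more careful than the paper in flagging the identification $\beta(T)=\beta^0(T)$, which the paper's own computation uses tacitly by writing both terms against $\beta(T)$.
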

	\begin{proof}
		We compute, {using $D = (P^\up - P^\dn) |D|$},
	$$ 	\ncint  T D |D|^{-3} = \ncint  T  (P^\up - P^\dn) 
{	|D|^{-2}  }
		=\bigl(\tau_1 \ox  ( \tau_0^\dn- \tau_0^\up ) +  (\tau_0^\up-\tau_0^\dn )\ox \tau_1\bigr) \bigl(\beta(T)\bigr),$$
		where we used \eqref{residues}.                              dn         up    	Then, from the definition of traces we see that:
		$$ \tau_0^\dn(x) - \tau_0^\up(x) = \tau_1(x), $$
		and therefore                      
			$$ 	\ncint  T D |D|^{-3} = \bigl(\tau_1 \ox  (\tau_1) +  (-\tau_1)\ox \tau_1\bigr) \bigl(\beta(T)\bigr) = 0. 	\qquad\qquad\Box$$ 
                                                
	\end{proof}
	
	\section{Final remarks}
We introduced a tangible notion of torsion for finite summable, regular spectral triples 
with a generalised noncommutative trace via a spectral trilinear functional of Dirac one-forms. 
It allows a direct verification whether the spectral triple (or the Dirac operator) is torsion-free.  Four of the discussed examples are indeed torsion-free except the simplest case of an almost-commutative geometry. We conjecture that the spectral triples over almost-commutative geometries with non-simple internal algebra, nontrivially coupled by $D$, have a non-vanishing torsion. We hope that in many other cases, the spectral torsion functional will enable a more advanced study of the families of Dirac operators.

	\section*{Declarations}
	LD acknowledges that this work is part of the project Graph Algebras partially supported by EU grant HORIZON-MSCA-SE-2021 Project 101086394 and
	was partially supported the University of Warsaw Thematic Research Programme "Quantum Symmetries".
	
	AS and PZ acknowledge that this work is supported by the Polish National Science Centre grant 2020/37/B/ST1/01540.

	\pagebreak
	
\end{document}